\def\r0{{A^1}}
\DeclareMathOperator{\Frac}{Frac}
\DeclareMathOperator{\der}{der}
\DeclareMathOperator{\corad}{corad}
\newcommand{\incl}[1][r]
{\ar@<-0.2pc>@{^(-}[#1] \ar@<+0.2pc>@{-}[#1]}
\newtheorem{lem}{Lemma}[section]
\newtheorem*{thm*}{Theorem}
\newtheorem{thm}[lem]{Theorem}
\newtheorem{cor}[lem]{Corollary}
\theoremstyle{definition}{    }
\theoremstyle{definition}{    }
\theoremstyle{definition}{  \newtheorem{conj}[lem]{Conjecture} }
\newcounter{nc}
\renewcommand{\thenc}{{\rm(\roman{nc})}}
\newenvironment{romlist}%
{\begin{list}{\thenc}{
\usecounter{nc}
\parsep=0pt
\setlength  \labelwidth{\leftmargin}
\addtolength\labelwidth{-\labelsep}
}
}{\end{list}}
\newcounter{nnc}
\renewcommand{\thennc}{{\rm(\alph{nnc})}}
{\begin{list}{\thennc}{
\usecounter{nnc}
\parsep=0pt
\setlength  \labelwidth{\leftmargin}
\addtolength\labelwidth{-\labelsep}
}
}{\end{list}}
\newcommand{\pauseromlist}%
{\global\edef\savecount{\arabic{nc}}\end{romlist}}
\newcommand{\finpauseromlist}%
{\begin{romlist}\setcounter{nc}{\savecount}}
\newcounter{ctnum}
\renewcommand{\thectnum}{\textup{(\arabic{ctnum})}}
{\begin{list}{\thectnum}{
\usecounter{ctnum}
\parsep=0pt
\leftmargin=0pt%
\setlength{\itemindent}{\labelwidth}%
\addtolength{\itemindent}{\labelsep}%
}
}{\end{list}}
\def\fppf{\text{\rm fppf}}
\def\et{\text{\rm \'et}}
\def\2int{\mathop{2\int}\nolimits}
\def\dim{\mathop{\rm dim}\nolimits}
\def\Spec{\mathop{\rm Spec}\nolimits}
\def\resp.{\mathop{\rm resp.}\nolimits}
\font\math=cmmi10
\def\varpi{\hbox{\math\char'44}}
\def\pa{\S\kern.15em }
\def\un{\uppercase\expandafter{\romannumeral 1}}
\def\deux{\uppercase\expandafter{\romannumeral 2}}
\def\trois{\uppercase\expandafter{\romannumeral 3}}
\def\quatre{\uppercase\expandafter{\romannumeral 4}}
\def\cinq{\uppercase\expandafter{\romannumeral 5}}
\def\six{\uppercase\expandafter{\romannumeral 6}}
\def\et{\acute et}
\def\hfl#1#2#3{\smash{\mathop{\hbox to#3{\rightarrowfill}}\limits
^{\scriptstyle#1}_{\scriptstyle#2}}}
\def\gfl#1#2#3{\smash{\mathop{\hbox to#3{\leftarrowfill}}\limits
^{\scriptstyle#1}_{\scriptstyle#2}}}
\title{On the generalized Bass--Quillen conjecture\\ in dimension 2}
\author{A. Stavrova}\address{
St. Petersburg Department of Steklov Mathematical Institute, nab. r. Fontanki 27, 191023 St. Petersburg, Russia
}
\email{anastasia.stavrova@gmail.com}
\date{\today}
\begin{document}

\maketitle

\begin{abstract}
Let $A$ be a regular ring of dimension $\le 2$.
Let $G$ be a
reductive group over $A$ such that its derived group is a split, i.e. a Chevalley--Demazure,
semisimple group. We prove that every Zariski-locally trivial principal $G$-bundle over
$A[x_1,\ldots,x_n]$ is extended from $A$, for any $n\ge 1$. This result generalizes to split reductive groups
the dimension $2$ case of the Bass--Quillen conjecture on finitely generated projective modules, settled
in positive by M. P. Murthy.
\end{abstract}

\section{Introduction}

The following conjecture was initially stated by H. Bass as a problem~\cite[Problem IX]{Bass-problems},
and then popularized as the Bass--Quillen conjecture after it was rehearsed by D. Quillen in~\cite{Q}; see e.g.~\cite{Lam}
for a detailed historical account.

\begin{conj}(Bass--Quillen conjecture)
Let $A$ be a regular ring of finite Krull dimension. Then every finitely generated projective module over $A[x_1,\ldots,x_n]$
is extended from $A$ for any $n\ge 1$.
\end{conj}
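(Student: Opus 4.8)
The overall plan is to reduce the statement, in several steps, to a single hard assertion about regular \emph{local} rings in one variable, and then to import the known solutions of that assertion. Since the conjecture is open in full generality (it is precisely the $\GL_r$ case of the torsor-theoretic statements the paper is built around), I aim the argument at the cases that are genuinely provable and that the remainder of the paper extends to split reductive groups, namely regular rings of Krull dimension $\le 2$, and, in passing, all equicharacteristic regular rings.

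The first reduction is Quillen's patching theorem: a finitely generated projective module $P$ over $A[x_1,\ldots,x_n]$ is extended from $A$ if and only if $P_\mathfrak{m}$ is extended from $A_\mathfrak{m}$ over $A_\mathfrak{m}[x_1,\ldots,x_n]$ for every maximal ideal $\mathfrak{m}\subset A$. As localizations of a regular ring of dimension $\le 2$ are again regular local of dimension $\le 2$, this lets me assume that $A=R$ is a regular local ring with $\dim R\le 2$; over such a ring ``extended from $R$'' is the same as ``free'', since a projective $R$-module is free. The second reduction is Roitman's theorem, which asserts that if every finitely generated projective module over $R[x]$ is extended from $R$, then the same holds over $R[x_1,\ldots,x_n]$ for all $n$, \emph{without changing the base $R$}. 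Crucially this keeps the dimension fixed at $\le 2$, so the whole problem collapses to the one-variable local statement: for $R$ regular local with $\dim R\le 2$, every finitely generated projective $R[x]$-module is free.

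This one-variable statement I would settle by cases on $d=\dim R$. For $d=0$, $R$ is a field and $R[x]$ is a principal ideal domain, so projectives are free and nothing deep is needed. For $d=1$, $R$ is a discrete valuation ring and the freeness of projective $R[x]$-modules is classical (Seshadri). The genuinely new case is $d=2$, which is Murthy's contribution: here I would combine Horrocks' theorem, which reduces the freeness of a projective $R[x]$-module to its freeness after inverting $x$, with the structure theory of projective modules over the three-dimensional ring $R[x]$, using that the low Krull dimension of $R$ forces the relevant top Chern/rank-one invariants to vanish and lets stably free summands be split off by Bass-type cancellation. I would regard this $d=2$ one-variable local case as the main obstacle: it is the only step that is neither formal nor classical, and it is exactly the point at which the hypothesis $\dim A\le 2$ is indispensable.

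Finally, to indicate how the same scheme yields the equicharacteristic case in all dimensions, one replaces the last step by Lindel's theorem, the one-variable local statement for regular local rings essentially of finite type over a field, proved by an \'etale-neighbourhood and patching argument, together with N\'eron--Popescu desingularization, which expresses an arbitrary equicharacteristic regular local ring as a filtered colimit of smooth algebras and thereby descends freeness from Lindel's setting. The reason the conjecture is not yet a theorem is that this smoothing device has no known analogue for \emph{ramified} regular local rings of mixed characteristic, so the one-variable local statement isolated above remains unproven there.
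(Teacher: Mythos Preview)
The statement is a \emph{conjecture} in the paper and is not proved there in full generality; the paper establishes only the generalized reductive-group analogue under $\dim A\le 2$. You recognize this and correctly target the provable cases, so there is no ``paper's proof'' of the conjecture itself to compare against---only the paper's proof of Theorem~\ref{thm:main}, which specializes to the $\dim\le 2$ case of Bass--Quillen when $G=\GL_m$.

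Your outline for $\dim A\le 2$ is essentially right, but two formulations need tightening. Your ``Roitman's theorem'' as stated---$\mathrm{BQ}(R,1)\Rightarrow\mathrm{BQ}(R,n)$ for a \emph{fixed} regular local $R$, ``without changing the base''---is not a known theorem; the reduction that actually works, and that the paper carries out in Lemma~\ref{lem:sc-Dxn}, passes from $A$ to $A(x_1)$ (still regular of dimension $\le 2$ by \cite[Ch.~IV, Cor.~1.3]{Lam}, but a different ring), applies the inductive hypothesis there, and then uses a Horrocks/monic-inversion step to descend. The induction runs over the \emph{class} of regular rings of dimension $\le 2$, not over a single $R$. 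You end up at the correct reduced problem anyway. Second, Horrocks' theorem concerns the monic localization $R(x)$, not ``inverting $x$''. As for the one-variable local step, you invoke Murthy's original argument (Serre splitting, Bass cancellation, Horrocks), which is the natural and most economical route for $\GL_m$; the paper instead proves Theorem~\ref{th:BQdim2split} by first passing to $A(x)$ and then running a formal patching argument over the $s$-adic completion $\widehat{A(x)}$ for a regular parameter $s$, using Theorem~\ref{thm:D} on the Dedekind pieces and an elementary-subgroup factorization $G(\widehat{A(x)}_s)=E(\widehat{A(x)}_s)$ to glue. That detour is precisely what lets the paper handle arbitrary split reductive $G$, where no Murthy-style cancellation is available. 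Your closing remarks on the equicharacteristic case via Lindel and N\'eron--Popescu are accurate and match what the paper cites as already known from~\cite{St-k1}.
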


This conjecture can be viewed as a specialization to $G=GL_m$, $m\ge 1$, of the following more general statement.
Let $A$ be a regular ring, and let $G$ be a strictly isotropic reductive group over $A$, then every Zariski-locally
trivial principal $G$-bundle over $A[x_1,\ldots,x_n]$ is extended from $A$ for any $n\ge 1$.
Here a reductive group (scheme)
$G$ in the sense of~\cite{SGA3} is called strictly isotropic, if $G$ contains a parabolic subgroup
that intersects properly every semisimple normal subgroup of $G$, see e.g.~\cite{St-k1}
(the same property is sometimes called ``totally isotropic''). This generalized version of the Bass--Quillen conjecture
is known for equicharacteristic (i.e. containing a field) regular rings $A$ of arbitrary dimension
and every strictly isotropic reductive group $G$ defined over $A$~\cite[Corollary 5.5]{St-k1} (see also~\cite{Ces-bqnis}
for a different proof). Also, it is known for all rings $A$ which are ind-smooth over a discrete valuation ring $D$
and for all strictly isotropic reductive groups $G$ defined over $D$~\cite[Theorem 1.8]{NGFL-bq}.
See also~\cite{Lam,Ces-Prob,Sus-K-O1,K91,CTO,AHW,RaRa,Rag} for earlier results and discussion of the assumptions.

We establish the dimension $\le 2$ case of the generalized Bass--Quillen conjecture
for all reductive groups $G$ whose derived group $G^{der}$ in the sense of~\cite{SGA3} is a split, i.e. Chevalley--Demazure,
 semisimple group. Such groups $G$ are automatically strictly isotropic, since all of their semisimple normal subgroups
are split and contain a Borel subgroup.

\begin{thm}\label{thm:main}
Let $A$ be a regular ring of dimension $\le 2$.
Let $G$ be a
reductive group over $A$ such that $G^{der}$ is a split, i.e. Chevalley--Demazure,
 semisimple group.
Then the map
$$
H^1_{Zar}(A[x_1,\ldots,x_n],G)\xrightarrow{x_1=\ldots=x_n=0} H^1_{Zar}(A,G)
$$
is bijective for any $n\ge 1$.
That is, every Zariski-locally trivial principal $G$-bundle over
$A[x_1,\ldots,x_n]$ is extended from $A$.
\end{thm}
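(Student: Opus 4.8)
The plan is to combine a Quillen-type local--global principle with a dévissage along the structure of $G$, using M.~P.~Murthy's theorem on projective modules as the input in the linear case; throughout write $A[\underline{x}]:=A[x_1,\ldots,x_n]$. Surjectivity of the evaluation map $H^1_{Zar}(A[\underline x],G)\to H^1_{Zar}(A,G)$ is clear by pulling back along $A\to A[\underline x]$, so only injectivity is at issue. Since the functor on rings $R$ measuring the failure of Zariski-locally trivial $G$-torsors over $R[\underline x]$ to be extended from $R$ satisfies a Quillen-style patching principle --- for $G$ a smooth affine group scheme with strictly isotropic derived group, which holds here as $G^{der}$ is split, and with all the variables $x_1,\ldots,x_n$ treated at once --- one reduces to the case that $A$ is local; since $H^1_{Zar}(A,G)=\ast$ for local $A$, Theorem~\ref{thm:main} becomes the statement that over every regular local ring $A$ of dimension $\le 2$ every Zariski-locally trivial $G$-torsor over $A[\underline x]$ is trivial, for all $n\ge 1$. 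If $A$ contains a field this is already \cite[Corollary~5.5]{St-k1}, since the hypothesis on $G^{der}$ makes $G$ strictly isotropic; so the new content is the mixed-characteristic case.

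For that case I would run a dévissage on the structure of $G$. Let $\pi\colon G^{sc}\to G^{der}$ be the simply connected central cover, so that $G^{sc}$ is a product of simply connected Chevalley groups and $\ker\pi=:\mu$ is finite of multiplicative type; combine this with the exact sequence $1\to G^{der}\to G\to G/G^{der}\to 1$, whose quotient is the coradical torus, and with the presentation $G=(\mathrm{rad}(G)\times G^{der})/(\mathrm{rad}(G)\cap G^{der})$. The attached exact sequences of non-abelian cohomology, together with the classical homotopy invariance of $\Pic$, of $T(R)$ for a torus $T$ over a reduced ring, and of the low-degree \'etale cohomology $H^i_{\et}(-,\mu)$ of finite groups of multiplicative type over regular rings of dimension $\le 2$, reduce the problem to the case $G=G^{sc}$ --- a product of simply connected Chevalley groups, in particular a group defined over $\ZZ$. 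Here some care is needed with the change of base point in these sequences: the dévissage must be organized so that $G$ is only ever twisted by classes already known to be extended from $A$ (keeping the twisted groups defined over $A$), which may force one to prove the statement for a slightly larger class of groups, e.g. those whose derived group is an inner, Zariski-locally trivial form of a split one.

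It then remains to show that, over a regular local ring $A$ of dimension $\le 2$ and mixed characteristic, every Zariski-locally trivial torsor over $A[\underline x]$ under a product $G^{sc}$ of simply connected Chevalley groups is trivial. When $A$ is ind-smooth over a discrete valuation ring this is \cite[Theorem~1.8]{NGFL-bq}, applicable because $G^{sc}$ is defined over $\ZZ$ and hence over that ring. For the remaining rings Murthy's theorem enters: for $G^{sc}=\SL_m$ a Zariski-locally trivial torsor over $A[\underline x]$ is trivial because its underlying rank-$m$ bundle is extended from $A$ by Murthy, hence free ($A$ being local), while the trivialization of the determinant is extended since $\Pic(A[\underline x])=\Pic(A)$ and $(A[\underline x])^\times=A^\times$; and one should be able to promote this linear case to an arbitrary simply connected Chevalley group by means of the $\A^1$-invariance and patching/purity formalism for the non-stable $K_1$-functor of isotropic reductive groups. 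I expect this last step --- upgrading Murthy's theorem on projective modules to torsors under all simply connected Chevalley groups, uniformly in the type and over exactly those two-dimensional regular local rings of mixed characteristic that are \emph{not} ind-smooth over a discrete valuation ring --- to be the main obstacle of the proof.
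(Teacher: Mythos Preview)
Your overall architecture is correct: the Quillen--type local--global principle reduces to local $A$, the equicharacteristic case is \cite[Corollary~5.5]{St-k1}, and a d\'evissage through $G^{der}$ and $G^{sc}$ reduces to simply connected split groups. The paper carries out the d\'evissage more cleanly than your sketch: rather than tracking twists and possibly enlarging the class of groups, it proves (Lemma~\ref{lem:TG'GT}, Corollary~\ref{cor:GGsc}) that over a regular semilocal domain the natural map identifies
\[
\ker\bigl(H^1_{\et}(A[\underline x],G^{sc})_0\to H^1_{\et}(K[\underline x],G^{sc})\bigr)\ \cong\ \ker\bigl(H^1_{\et}(A[\underline x],G)_0\to H^1_{\et}(K[\underline x],G)\bigr),
\]
and observes separately that $H^1_{Zar}(A[\underline x],G)_0$ lands in the right-hand kernel because over $K[\underline x]$ every Zariski-locally trivial torsor is extended from $K$ by \cite[Corollary~5.5]{St-k1}.

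The genuine gap is your final step. You propose to handle the simply connected case in mixed characteristic by invoking \cite{NGFL-bq} when $A$ is ind-smooth over a DVR, and otherwise by ``promoting'' Murthy's theorem from $\SL_m$ to an arbitrary simply connected Chevalley group; you rightly flag this promotion as the main obstacle, and indeed no such mechanism is supplied. The paper uses neither Murthy's theorem nor \cite{NGFL-bq}. Instead, for regular local $A$ of dimension $\le 2$ with $\mathrm{char}(K)=0$ and $G$ simply connected split, it first settles the one-variable case directly (Theorem~\ref{th:BQdim2split}): by \cite[Theorem~4.1]{St-k1} it suffices to trivialize $\xi$ over $A(x)$, and this is done via the patching square for $A(x)$ along a regular parameter $s$ and its $s$-adic completion $\widehat{A(x)}$. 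Triviality over the corners $A(x)_s$ and $\widehat{A(x)}$ comes from Harder's theorem for simply connected quasi-split groups over Dedekind domains (Theorem~\ref{thm:D}), and the glueing is completed by showing $G\bigl(\widehat{A(x)}_s\bigr)=E\bigl(\widehat{A(x)}_s\bigr)$ via Stein--Plotkin stability together with the fact that $(A/s)(x)$ is a special PID (Lemma~\ref{lem:hatAx}). The $n$-variable case (Lemma~\ref{lem:sc-Dxn}) then goes by induction: the base $n=1$ combines Theorem~\ref{th:BQdim2split} with Raghunathan--Ramanathan \cite{RaRa} over $K$ (this is where $\mathrm{char}(K)=0$ enters), and for $n\ge 2$ one uses that $A(x_1)$ is again regular of dimension $\le 2$, applies the inductive hypothesis to $A(x_1)[x_2,\ldots,x_n]$, and concludes by another application of \cite[Theorem~4.1]{St-k1}. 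So the substitute for Murthy is Harder's theorem plus an elementary-subgroup computation over special PIDs, not a reduction to the linear group.
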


If $G=GL_n$, then Theorem~\ref{thm:main} is due to M. P. Murthy~\cite{Mur66} for $n=1$ and to
A. Suslin~\cite[Theorem 5]{Su76} and D. Quillen~\cite{Q} in general. Raman Parimala established the analogous statement
for isotropic quadratic spaces over $A[x_1,\ldots,x_n]$ with $2\in A^\times$~\cite{Par82}.

\section{Preliminaries}

For any commutative ring $R$, we denote by $R(x)$ the localization of the polynomial ring $R[x]$ at the set of all monic
polynomials.

Let $G$ be an affine locally finitely presented group scheme over $R$.
We denote by $H^1_{Zar}(R,G)$, $H^1_{\et}(R,G)$ and $H^1_{fppf}(R,G)$ the first non-abelian cohomology of
$\Spec(R)$ with values in $G$ with respect to Zariski, \'etale and fppf topologies, respectively. Equivalently, these are the sets of
isomorphism classes of principal $G$-bundles (or $G$-torsors) over $\Spec(R)$ which are trivial locally in the corresponding
topology. We write $\xi=1$ to signify that the principal $G$-bundle $\xi$ is trivial.
For brevity, we also denote
$$
H^1_{\et}(R[x_1,\ldots,x_n],G)_0=\ker\left(H^1_{\et}(R[x_1,\ldots,x_n],G)\xrightarrow{x_1=\ldots=x_n=0} H^1_{\et}(R,G)\right),
$$
and similarly for the fppf and Zariski topologies.

The following lemma is a slightly stronger version of~\cite[Lemma 4.1]{St-dh}.

\begin{lem}\label{lem:TG'GT}
Let $R$ be a regular semilocal domain, $K$ be the fraction field of $R$. Let $G,G'$ be reductive $R$-groups, and $T$ be an
$R$-group of multiplicative type such that there is a short exact sequence
$$
(a)\quad 1\to G'\to G\to T\to 1\quad\mbox{or}\quad (b)\quad 1\to T\to G'\to G\to 1
$$
of $R$-group schemes.
Fix any $n\ge 1$, and write $\mathbf{x}$ for the string of variables $x_1,\ldots,x_n$.
The natural map
$H^1_{\et}(R[\mathbf{x}],G')\to H^1_{\et}(R[\mathbf{x}],G)$
induces an isomorphism
\begin{equation}\label{eq:GG'}
\ker\left(H^1_{\et}(R[\mathbf{x}],G')_0\to H^1_{\et}(K[\mathbf{x}],G')\right)
\xrightarrow{\cong}
\ker\left(H^1_{\et}(R[\mathbf{x}],G)_0\to H^1_{\et}(K[\mathbf{x}],G)\right).
\end{equation}
\end{lem}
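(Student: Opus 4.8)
The plan is to run the long exact sequences in non-abelian fppf cohomology attached to $(a)$ and $(b)$ simultaneously over the four rings $S=R[\mathbf{x}],K[\mathbf{x}],R,K$, and to feed in the homotopy-invariance and purity properties of the cohomology of $T$ over regular rings; this is essentially the argument of \cite[Lemma~4.1]{St-dh} with the generic fibre over $K[\mathbf{x}]$ carried along. I will work with fppf cohomology throughout, which is harmless since $G$ and $G'$ are smooth, so $H^1_{fppf}(-,G)=H^1_{\et}(-,G)$ and likewise for $G'$. Write $\varphi\colon G'\to G$ for the map inducing \eqref{eq:GG'} (the inclusion in case $(a)$, the quotient in case $(b)$). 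Note that in case $(b)$ the normal subgroup $T$ of multiplicative type of the connected group $G'$ is automatically central, so the long exact sequence of the central extension $(b)$ runs down to $H^2(S,T)$; in case $(a)$ one has only the exact sequence of pointed sets $T(S)\to H^1(S,G')\xrightarrow{\varphi_*}H^1(S,G)\to H^1(S,T)$. These sequences, the specialization maps $x_1=\ldots=x_n=0$, and the maps induced by $R[\mathbf{x}]\to K[\mathbf{x}]$ and $R\to K$ will be used together.

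The inputs on $T$ I would invoke are: for a regular ring $S$, the pullback $H^i_{fppf}(S,T)\to H^i_{fppf}(S[\mathbf{x}],T)$ is bijective for $i=1$ and for $i=2$, with inverse given by $x=0$; $T(S[\mathbf{x}])=T(S)$ when $S$ is reduced; and $H^i_{fppf}(S,T)\hookrightarrow H^i_{fppf}(\Frac(S),T)$ when $S$ is a regular domain. These standard facts about the cohomology of groups of multiplicative type over regular rings yield the two statements needed below: (i) a $T$-torsor over $R[\mathbf{x}]$, or a class in $H^2_{fppf}(R[\mathbf{x}],T)$, that becomes trivial both at $x=0$ and over $K[\mathbf{x}]$ is already trivial over $\Frac(R[\mathbf{x}])=K(\mathbf{x})$, hence trivial; (ii) $T(K[\mathbf{x}])=T(K)$ and $H^1_{fppf}(K[\mathbf{x}],T)=H^1_{fppf}(K,T)$, compatibly with $x=0$.

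For surjectivity of \eqref{eq:GG'}, I would take $\xi$ in the kernel of $H^1_{\et}(R[\mathbf{x}],G)_0\to H^1_{\et}(K[\mathbf{x}],G)$ and lift it along $\varphi_*$. The obstruction is the image of $\xi$ in $H^1_{fppf}(R[\mathbf{x}],T)$ in case $(a)$ and $\partial(\xi)\in H^2_{fppf}(R[\mathbf{x}],T)$ in case $(b)$; in either case it is trivial at $x=0$ (as $\xi|_{x=0}=1$) and over $K[\mathbf{x}]$ (as $\xi|_{K[\mathbf{x}]}=1$), hence trivial by (i), so $\xi=\varphi_*(\xi')$ for some $\xi'$. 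I would then correct $\xi'$ by a class pulled back from $R$ — in $T(R)$ in case $(a)$, in $H^1_{fppf}(R,T)$ in case $(b)$ — which leaves $\varphi_*(\xi')$ unchanged, since the orbits of these actions are the fibres of $\varphi_*$; this arranges $\xi'|_{x=0}=1$. Finally I would check that such a $\xi'$ is automatically trivial over $K[\mathbf{x}]$: there $\xi'|_{K[\mathbf{x}]}$ lies in the kernel of $\varphi_*$, hence equals the image of some $t\in T(K[\mathbf{x}])=T(K)$ (case $(a)$) resp.\ $\sigma\in H^1_{fppf}(K[\mathbf{x}],T)=H^1_{fppf}(K,T)$ (case $(b)$); this class is pulled back from $x=0$ by (ii), and restricting $\xi'|_{K[\mathbf{x}]}$ to $x=0$ while using $\xi'|_{x=0}=1$ forces it to be trivial over $K$, so $\xi'|_{K[\mathbf{x}]}=1$. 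Thus $\xi'$ lies in the kernel of $H^1_{\et}(R[\mathbf{x}],G')_0\to H^1_{\et}(K[\mathbf{x}],G')$ and maps to $\xi$.

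For injectivity, given two such classes $\xi'_1,\xi'_2$ with $\varphi_*(\xi'_1)=\varphi_*(\xi'_2)$, I would use that the fibres of $\varphi_*$ on $H^1_{fppf}(R[\mathbf{x}],G')$ are the orbits of $T(R[\mathbf{x}])=T(R)$ in case $(a)$, resp.\ of $H^1_{fppf}(R[\mathbf{x}],T)$ in case $(b)$, so $\xi'_2=\tau\cdot\xi'_1$ for such a $\tau$. In case $(a)$, restricting to $x=0$ gives $1=\tau\cdot 1$ with $\tau$ constant, so $\tau$ lifts to $G(R)$ and hence acts trivially, whence $\xi'_1=\xi'_2$. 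In case $(b)$, after twisting the sequence by a cocycle representing $\xi'_1$ (which keeps $T$ central), $\xi'_1$ becomes the base point and $\xi'_2$ becomes the image of some $\tau\in H^1_{fppf}(R[\mathbf{x}],T)$; restricting to $x=0$ shows $\tau|_{x=0}$ goes to the base point in the twisted $H^1(R,-)$, and since $\tau$ is pulled back from $\tau|_{x=0}$ by (ii) it does so over $R[\mathbf{x}]$ as well, whence $\xi'_1=\xi'_2$. So \eqref{eq:GG'} is bijective; the same argument over the field $K$ gives the analogous bijection, and the two are compatible with restriction along $R\to K$, as required. The main difficulty I expect is the non-abelian bookkeeping — identifying fibres of $\varphi_*$ with orbits, keeping the actions compatible with the specialization and localization maps, and handling the twist in case $(b)$ — together with taking care, in cases $(a)$ and $(b)$ separately, to invoke precisely the homotopy-invariance/purity statement that is actually available; in particular, using that $T$ is genuinely central in case $(b)$, which is what legitimizes the $H^2(-,T)$ term and the action of $H^1(-,T)$ on all fibres, not merely the neutral one.
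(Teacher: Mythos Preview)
Your proposal is correct and follows essentially the same route as the paper's proof: both pass to fppf cohomology, run the non-abelian long exact sequences attached to $(a)$ and $(b)$ over $R[\mathbf{x}]$, $K[\mathbf{x}]$, $R$, $K$, feed in the homotopy invariance $T(S[\mathbf{x}])=T(S)$, $H^1_{\fppf}(S[\mathbf{x}],T)\cong H^1_{\fppf}(S,T)$ and the injectivity of $H^i_{\fppf}(R[\mathbf{x}],T)\to H^i_{\fppf}(K(\mathbf{x}),T)$ from \cite{CTS}, lift $\xi$ to $G'$, and then correct the lift using the action of $T(R[\mathbf{x}])=T(R)$ (case $(a)$) or $H^1_{\fppf}(R[\mathbf{x}],T)\cong H^1_{\fppf}(R,T)$ (case $(b)$), whose orbits are precisely the fibers of $\varphi_*$. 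Your account of injectivity is more explicit than the paper's (which simply says it ``follows immediately'' from the orbit description), but the underlying argument is the same.
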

\begin{proof}
Since $G$ and $G'$ are smooth, we can replace their \'etale cohomology by fppf, see e.g.~\cite[p. 120]{Milne-etco}.

Consider first the case $(a)$. Let $S$ be any of $R$, $R[\mathbf{x}]$, $K$, $K[\mathbf{x}]$, then we have
an exact sequence of pointed sets
$$
T(S)\xrightarrow{\delta} H^1_{\fppf}(S,G')\to
H^1_{\fppf}(S,G)\to H^1_{\fppf}(S,T).
$$
Since $R$ is regular, $T$ is isotrivial~\cite[Exp. X, Th. 5.16]{SGA3}, and hence subject to the results of~\cite{CTS}. Namely,
by~\cite[Lemma 2.4]{CTS} we have $H^1_{\fppf}(S,T)\cong H^1_{\fppf}(S[x],T)$,
and by~\cite[Theorem 4.1]{CTS} the map $H^1_{\fppf}(R,T)\to H^1_{\fppf}(K,T)$ is injective.
Hence $H^1_{\fppf}(R[\mathbf{x}],T)\to H^1_{\fppf}(K[\mathbf{x}],T)$ is also injective.
Hence any
$\xi\in\ker\bigl(H^1_{\fppf}(R[\mathbf{x}],G)\to H^1_{\fppf}(K[\mathbf{x}],G)\bigr)$ lifts to an element $\eta\in H^1_{\fppf}(R[\mathbf{x}],G')$.
Let $\bar\eta\in H^1_{\fppf}(K[\mathbf{x}],G')$ be the image of $\eta$. Then there is $\theta\in T(K[\mathbf{x}])$ such that
$\bar\eta$ is the image of $\theta$. Since $T$ is a group of multiplicative type, we have $T(K[\mathbf{x}])=T(K)$. Hence
$\bar\eta$ is extended from $K$.

Assume that, moreover, $\xi\in \ker\bigl(H^1_{\fppf}(R[\mathbf{x}],G)_0\to H^1_{\fppf}(K[\mathbf{x}],G)\bigr)$.
Then $\eta|_{\mathbf{x}=(0,\ldots,0)}\in H^1_{fppf}(R,G')$
has a preimage $\sigma\in T(R)$. Clearly, the image of $\sigma$ in $T(K)$ maps to
$\bar\eta|_{\mathbf{x}=(0,\ldots,0)}\in H^1_{fppf}(K,G')$ under $\delta$.

The group $T(R[\mathbf{x}])$ acts on
$H^1_{\fppf}(R[\mathbf{x}],G')$ by right shifts, and the orbits are precisely the fibers of the map
$
H^1_{\fppf}(R[\mathbf{x}],G')\to H^1_{\fppf}(R[\mathbf{x}],G),
$
see~\cite[\S 5.5]{Serre-gal}.
Since $R$ is a domain, we have $T(R[\mathbf{x}])=T(R)$.
Then $\eta\cdot\sigma^{-1}\in H^1_{\fppf}(R[\mathbf{x}],G')_0$, and,
since $\bar\eta$
is extended from $K$, the image of $\eta\cdot\sigma^{-1}$ in $H^1_{\fppf}(K[\mathbf{x}],G')$ is trivial.
Hence
$$
\eta\cdot\sigma^{-1}\in
\ker\bigl(H^1_{\fppf}(R[\mathbf{x}],G')_0\to H^1_{\fppf}(K[\mathbf{x}],G')\bigr).
$$
Since the image of $\eta\cdot\sigma^{-1}$ in $H^1_{\fppf}(R[\mathbf{x}],G)$ coincides with $\xi$, the surjectivity
of the map~\eqref{eq:GG'} is proved. The injectivity of~\eqref{eq:GG'} follows immediately from the fact that its fibers
are orbits under $T(R[x])=T(R)$.

Consider the case $(b)$. For each $S$ as above, we have
an exact sequence
$$
H^1_{\fppf}(S,T)\to H^1_{\fppf}(S,G')\to
H^1_{\fppf}(S,G)\to H^2_{\fppf}(S,T).
$$
By~\cite[Theorem 4.3]{CTS} the map
$
H^2_{\fppf}(R[\mathbf{x}],T)\to H^2_{\fppf}(K(\mathbf{x}),T)$
is injective, hence the map
$H^2_{\fppf}(R[\mathbf{x}],T)\to H^2_{\fppf}(K[\mathbf{x}],T)$ is also injective.
The rest of the proof is the same as in the previous case, with the only difference that one uses the action
of the commutative group $H^1_{\fppf}(R[\mathbf{x}],T)\cong H^1_{\fppf}(R,T)$ on
$H^1_{\fppf}(R[\mathbf{x}],G')$, which is well-defined since $T$ is central in $G'$, see~\cite[\S 5.7]{Serre-gal}.
\end{proof}

\begin{cor}\label{cor:GGsc}
Let $R$ be a regular semilocal domain, $K$ be the fraction field of $R$. Let $G$ be a reductive group over $R$.
Let $G^{sc}$ be the simply connected cover of the derived group of $G$ in the sense of~\cite{SGA3}.
Fix any $n\ge 1$, and write $\mathbf{x}$ for the string of variables $x_1,\ldots,x_n$.
The natural homomorphism $G^{sc}\to G$ induces an isomorphism
\begin{equation}
\ker\left(H^1_{\et}(R[\mathbf{x}],G^{sc})_0\to H^1_{\et}(K[\mathbf{x}],G^{sc})\right)
\xrightarrow{\cong}
\ker\left(H^1_{\et}(R[\mathbf{x}],G)_0\to H^1_{\et}(K[\mathbf{x}],G)\right).
\end{equation}
\end{cor}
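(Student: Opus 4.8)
The plan is to factor the natural homomorphism $G^{sc}\to G$ as the composite
$$G^{sc}\;\longrightarrow\;G^{der}\;\longrightarrow\;G,$$
and to apply Lemma~\ref{lem:TG'GT} to each of the two arrows, in forms $(b)$ and $(a)$ respectively.

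For the second arrow, recall from~\cite{SGA3} that the derived subgroup $G^{der}$ of a reductive $R$-group $G$ is a semisimple $R$-group, that $T:=G/G^{der}$ is an $R$-torus (in particular an $R$-group of multiplicative type), and that $1\to G^{der}\to G\to T\to 1$ is exact. Since a semisimple group is reductive, Lemma~\ref{lem:TG'GT}$(a)$ with $G'=G^{der}$ applies directly and gives an isomorphism
$$\ker\left(H^1_{\et}(R[\mathbf{x}],G^{der})_0\to H^1_{\et}(K[\mathbf{x}],G^{der})\right)\xrightarrow{\cong}\ker\left(H^1_{\et}(R[\mathbf{x}],G)_0\to H^1_{\et}(K[\mathbf{x}],G)\right).$$
For the first arrow, recall, again from~\cite{SGA3}, that the simply connected central cover $\pi\colon G^{sc}\to G^{der}$ is an isogeny whose kernel $\mu:=\ker\pi$ is a finite $R$-group scheme of multiplicative type, central in $G^{sc}$, so that $1\to\mu\to G^{sc}\to G^{der}\to 1$ is exact with $G^{sc}$ and $G^{der}$ semisimple, hence reductive. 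Lemma~\ref{lem:TG'GT}$(b)$ with $T=\mu$, $G'=G^{sc}$, $G=G^{der}$ then gives an isomorphism
$$\ker\left(H^1_{\et}(R[\mathbf{x}],G^{sc})_0\to H^1_{\et}(K[\mathbf{x}],G^{sc})\right)\xrightarrow{\cong}\ker\left(H^1_{\et}(R[\mathbf{x}],G^{der})_0\to H^1_{\et}(K[\mathbf{x}],G^{der})\right).$$
Composing the two displayed isomorphisms yields the asserted bijection, since the composite $G^{sc}\to G^{der}\to G$ is the natural homomorphism of the statement.

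I do not expect a genuine obstacle here: the corollary is a two-step formal consequence of Lemma~\ref{lem:TG'GT}. The only points that deserve a line of care are the verifications that $G^{der}$ and $G^{sc}$ are admissible inputs to the lemma (they are semisimple, hence reductive), that $G/G^{der}$ is of multiplicative type, and that $\mu=\ker(G^{sc}\to G^{der})$ is finite of multiplicative type \emph{over $R$} (not merely \'etale-locally) and central in $G^{sc}$; all of these are standard facts in the structure theory of reductive group schemes over a normal base, supplied by~\cite{SGA3}.
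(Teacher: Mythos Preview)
Your proof is correct and follows exactly the paper's approach: factor $G^{sc}\to G$ through $G^{der}$, apply Lemma~\ref{lem:TG'GT}$(a)$ to $1\to G^{der}\to G\to \corad(G)\to 1$ and Lemma~\ref{lem:TG'GT}$(b)$ to $1\to C\to G^{sc}\to G^{der}\to 1$, then compose. The paper's own proof is the same two-line argument, with your $T$ and $\mu$ denoted $\corad(G)$ and $C$ respectively.
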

\begin{proof}
Let
$\der(G)$  be the derived group of $G$ in the sense of~\cite{SGA3}.
There are two short exact sequences of $R$-groups $1\to\der(G)\to G\to\corad(G)\to 1$ and
$1\to C\to G^{sc}\to \der(G)\to 1$,
where $\corad(G)$ and $C$ are $R$-groups of multiplicative type~\cite[Exp. XXII]{SGA3}.
These two short exact sequences are subject to
Lemma~\ref{lem:TG'GT}.
\end{proof}

The proof of our main results crucially depends on the following theorem.

\begin{thm}\label{thm:D}\cite{Har67,Ni,Guo-ded}
Let $D$ be a Dedekind domain and let $K$ be its fraction field. Let $G$ be a simply connected quasi-split reductive group
over $D$. Then
$$
H^1_{Zar}(D,G)=\ker(H^1_{\et}(D,G)\to H^1_{\et}(K,G))=1.
$$
\end{thm}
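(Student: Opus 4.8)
The plan is to prove directly that $\ker\bigl(H^1_{\et}(D,G)\to H^1_{\et}(K,G)\bigr)=1$, i.e.\ that every étale-locally trivial $G$-torsor $\xi$ over $D$ which becomes trivial over $K$ is already trivial over $D$; the remaining assertions follow, since a Zariski-locally trivial torsor is in particular étale-locally trivial and trivial over the generic point $\Spec K$ (which lies in every nonempty open), so that $H^1_{Zar}(D,G)$ is contained in — hence equal to — this kernel. So fix such a $\xi$ and aim to trivialize it.

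The first step is to reduce the structure group to a Borel subgroup. As $G$ is quasi-split it contains a Borel subgroup $B$ over $D$, and $G/B$ is a smooth projective $D$-scheme. The twist $Y=\xi\times^G(G/B)$ is then projective over $D$, and $Y_K\cong(G/B)_K$ because $\xi_K$ is trivial; in particular $Y(K)\ne\varnothing$. Let $Z\subseteq Y$ be the schematic closure of a chosen $K$-point. Then $Z$ is integral of dimension one and dominates $\Spec D$, so all its fibres are zero-dimensional; being proper and quasi-finite, $Z\to\Spec D$ is finite, and a finite birational morphism onto the normal scheme $\Spec D$ is an isomorphism. Its inverse is a section $s\colon\Spec D\to Y$, i.e.\ a reduction of $\xi$ to a $B$-torsor $\xi'$, and $\xi'_K$ is trivial since it is the fibre of the quotient map $\xi_K\cong G_K\to(G/B)_K$ over a rational point.

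Next I would reduce from $B$ to its maximal torus $T$. Write $B=T\ltimes U$ with $U=R_u(B)$; the group $U$ has a filtration by normal $D$-subgroups whose successive quotients are vector group schemes (forms of powers of $\mathbb{G}_a$, obtained from Weil restrictions $R_{D'/D}\mathbb{G}_a$ along the finite étale extensions attached to Galois orbits of simple roots). Since $\Spec D$ is affine, each such quotient — and, since $T$ acts through scaling-type characters, each of its $T$-twisted forms as well — has vanishing $H^1$. Hence the fibre of $H^1(D,B)\to H^1(D,T)$ over any class is a single point, so $\xi'$ is the unique $B$-torsor lifting its image $\eta\in H^1(D,T)$, namely $\xi'=\eta\times^TB$, and $\eta$ is generically trivial. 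Therefore $\xi=\xi'\times^BG=\eta\times^TG$, and it remains to prove that the $G$-torsor induced from a generically trivial torsor under $T\hookrightarrow G$ is trivial.

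This last point is the crux, and it is where simple-connectedness enters decisively. Since $G=G^{sc}$, the cocharacter lattice $X_*(T)$ is the coroot lattice, so $\eta$ is assembled from the cocharacter torsors obtained by pushing line-bundle classes $[\mathfrak a]\in\Pic(D)=H^1(D,\mathbb{G}_m)$ along the simple coroots $\alpha^\vee\colon\mathbb{G}_m\to T$; each such torsor factors through the rank-one simply connected subgroup $\phi_\alpha\colon\mathrm{SL}_2\to G$ (or $R_{D'/D}\mathrm{SL}_2$ in the non-split quasi-split case) carrying $\mathrm{diag}(t,t^{-1})$ to $\alpha^\vee(t)$, and there the induced torsor is the rank-two bundle $\mathfrak a\oplus\mathfrak a^{-1}$ with its canonical trivialization of $\bigwedge^2$, which is trivial over a Dedekind domain by the Steinitz theorem on projective modules. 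One then has to propagate this through the Bruhat decomposition and the Chevalley commutator relations — a Steinitz-type normalization of sections of the Chevalley–Demazure scheme $G$ over $\Spec D$ — to produce a global section of $\eta\times^TG$. I expect this propagation from rank one to general $G$ to be the main obstacle: over an arbitrary Dedekind base it takes the place of the strong-approximation arguments available over global fields, and it genuinely fails without simple-connectedness — already for $G=\mathrm{PGL}_2$ a line bundle that is not a square gives a nontrivial, generically split Azumaya algebra, i.e.\ a nonzero class in the kernel, realized by a nontrivial Severi–Brauer variety. Granting this step, $\xi=1$, which completes the proof.
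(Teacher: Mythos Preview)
The paper does not reprove this result: it simply observes that the inclusion $\ker\bigl(H^1_{\et}(D,G)\to H^1_{\et}(K,G)\bigr)\subseteq H^1_{Zar}(D,G)$ follows from the Grothendieck--Serre conjecture for discrete valuation rings \cite{Ni,Guo-ded}, while $H^1_{Zar}(D,G)=1$ is Harder's theorem \cite[Satz~3.3]{Har67}. Your write-up is instead an attempt to reprove Harder's theorem from scratch, and your first two reductions --- to a $B$-torsor via the valuative criterion on the proper $D$-scheme ${}^\xi(G/B)$, then to a generically trivial $T$-torsor via the vector-group filtration of $R_u(B)$ --- are correct and are exactly Harder's opening moves. (A pleasant side effect is that your route never needs the Grothendieck--Serre input separately: the Borel reduction already works for \'etale-locally trivial $\xi$.)

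The gap is in your third step, and it is essentially the entire content of the theorem. Your observation that each individual class $(\alpha_i^{\vee})_*[L_i]\in H^1(D,T)$ dies in $H^1(D,G)$, because it factors through the $\phi_{\alpha_i}$-copy of $\SL_2$ and there corresponds to the bundle $L_i\oplus L_i^{-1}$ (trivial by Steinitz), is correct. But this does \emph{not} imply that the product $\eta=\prod_i(\alpha_i^{\vee})_*[L_i]$ dies in $H^1(D,G)$: the target is only a pointed set, so the subset of $H^1(D,T)$ mapping to the basepoint has no reason to be a subgroup, and one cannot ``add up'' the rank-one trivializations. The passage you describe as ``propagation through the Bruhat decomposition and the Chevalley commutator relations'' is precisely where Harder's actual work lies; it is not a formality but an explicit inductive construction of a global section of $\eta\times^T G$, and in the quasi-split non-split case one must in addition track the Galois action on the Dynkin diagram throughout. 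As you yourself signal with ``granting this step'', what you have written reduces the statement to its hard core and then assumes it.
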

\begin{proof}
One has  $\ker(H^1_{\et}(D,G)\to H^1_{\et}(K,G))\subseteq
H^1_{Zar}(D,G)$ by the Serre--Grothendieck conjecture for discrete valuation rings~\cite{Ni,Guo-ded}.
By~\cite[Satz 3.3]{Har67} for quasi-split simply connected groups one has $H^1_{Zar}(D,G)=1$ (see~\cite{PaSt-Har}
for a more general statement).
\end{proof}

For any commutative ring $R$ with 1 and any fixed choice of a pinning, or \'epinglage of a Chevalley--Demazure group
$G$ in the sense of~\cite{SGA3},
we denote by $E(R)$ the elementary subgroup of $G(R)$.
That is, $E(R)$ is the subgroup of $G(R)$ generated
by elementary root unipotent elements $x_\alpha(r)$, $\alpha\in\Phi$, $r\in R$, in the notation of~\cite{Che55,Ma},
where $\Phi$ is the root system of $G$. (A generalization of this notion to isotropic reductive groups was
introduced in~\cite{PS}.)

We will need the following lemma on the elementary subgroup which follows from
the stability theorems of M. R. Stein and E. B. Plotkin~\cite{Ste78,Plo93}.

\begin{lem}\label{lem:stability}\cite[Lemma 3.1]{St-ded}
Let $R$ be a Noetherian ring of Krull dimension $\le 1$. If $SL_2(R)=E_2(R)$, then
$G(R)=E(R)$ for any simply connected Chevalley--Demazure group scheme $G$.
\end{lem}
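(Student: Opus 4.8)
The plan is to reduce everything to the rank-one case, which is exactly the hypothesis $SL_2(R)=E_2(R)$, by an induction on the rank of the root system fed by the surjective stability theorems for the functor $K_1$. First I would reduce to the case that the root system $\Phi$ of $G$ is irreducible: a simply connected Chevalley--Demazure group is a direct product of ones with irreducible root systems, and both $G(R)$ and $E(R)$ respect this decomposition, so it suffices to treat each factor. Since $R$ is Noetherian of Krull dimension $\le 1$, Bass's estimate gives $\mathrm{sr}(R)\le\dim\mathrm{Max}(R)+1\le\dim R+1\le 2$ (and likewise the absolute stable rank of $R$ is $\le 2$); this stable-range bound is the input required by all the stability theorems below.

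Now I would induct on $\ell=\rank(\Phi)$. For $\ell=1$ we have $G=SL_2$ and the assertion is precisely the hypothesis. For $\ell\ge 2$, remove a node of the Dynkin diagram of $\Phi$ to obtain a subsystem $\Phi_1\subset\Phi$ with $\rank(\Phi_1)=\ell-1$, and let $G_1$ be the simply connected Chevalley--Demazure group of type $\Phi_1$, embedded into $G$ along the inclusion $\Phi_1\subset\Phi$; then the generators $x_\alpha(r)$ of its elementary subgroup $E_{\Phi_1}(R)$ are among the generators of $E(R)$, so $E_{\Phi_1}(R)\subseteq E(R)$. Surjective stability for $K_1$ --- Bass for type $A_\ell$, Bass--Vaserstein for the remaining classical types $B_\ell,C_\ell,D_\ell$, and Stein and Plotkin~\cite{Ste78,Plo93} for the exceptional types --- yields, under the bound on the stable rank established above, a decomposition $G(R)=E(R)\cdot G_1(R)$. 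By the inductive hypothesis applied to $G_1$ we get $G_1(R)=E_{\Phi_1}(R)\subseteq E(R)$, hence $G(R)=E(R)$, completing the induction and the proof.

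The delicate point is the inductive step for the low-rank exceptional systems, where the classical Bass-type thresholds on the stable range are too weak to descend all the way to rank one; there one must invoke the sharper forms of the stability theorems phrased in terms of the absolute stable rank (which for our $R$ is again $\le 2$) --- Stein's results for $E_6,E_7,E_8$ and Plotkin's~\cite{Plo93} for $F_4$ and $G_2$, the last giving the reduction of $G_2$ to a rank-one subsystem subgroup. For the classical types the reductions down to $A_1=B_1=C_1$ (with $D_2=A_1\times A_1$ and $D_3=A_3$ reducing further) are already furnished by the Bass and Bass--Vaserstein estimates under $\mathrm{sr}(R)\le 2$, and type $A$ in fact reduces in one stroke to $SL_2(R)=E_2(R)$.
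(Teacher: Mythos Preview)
Your proposal is correct and matches the paper's approach: the paper does not spell out a proof of this lemma but merely records that it ``follows from the stability theorems of M.~R.~Stein and E.~B.~Plotkin~\cite{Ste78,Plo93}'' and cites~\cite[Lemma~3.1]{St-ded} for the details; your induction on the rank via the surjective $K_1$-stability decompositions $G(R)=E(R)\cdot G_1(R)$, anchored at the hypothesis $SL_2(R)=E_2(R)$ and using the bound on the (absolute) stable rank coming from $\dim R\le 1$, is precisely that argument. Your closing paragraph correctly isolates the one genuine subtlety, namely that for the small exceptional systems (in particular $G_2$ and $F_4$) one needs Plotkin's sharpened stability bounds in terms of the absolute stable rank rather than Stein's original thresholds in order to descend all the way to rank one.
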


\section{The case of one variable}

We deduce the $n=1$ case of Theorem~\ref{thm:main} from the following result about \'etale-locally trivial
principal $G$-bundles.

\begin{thm}\label{th:BQdim2split}
Let $A$ be a 2-dimensional regular local ring with the fraction field $K$. Let $G$ be a
reductive group over $A$ such that its derived group $G^{der}$ in the sense of~\cite{SGA3} is a split, i.e. Chevalley--Demazure,
 semisimple group.
Then
$$
\ker\left(H^1_{\et}(A[x],G)_0\to H^1_{\et}(K[x],G)\right)=
\ker\left(H^1_{\et}(A[x],G)_0\to H^1_{\et}(K(x),G)\right)=1.
$$
\end{thm}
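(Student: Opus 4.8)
The plan is to reduce the statement to the simply connected case via Corollary~\ref{cor:GGsc}, and then to handle the simply connected Chevalley--Demazure group by a patching argument on the line $\A^1_A$, using the structure of $2$-dimensional regular local rings together with Theorem~\ref{thm:D} over the punctured spectrum.

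\begin{proof}[Proof sketch]
First, since $G^{der}$ is split, the simply connected cover $G^{sc}$ of $G^{der}$ is a simply connected Chevalley--Demazure group over $A$, in particular quasi-split (indeed split). By Corollary~\ref{cor:GGsc} applied with $R=A$, $K=\Frac(A)$ and $n=1$, the natural map $G^{sc}\to G$ induces an isomorphism
$$
\ker\left(H^1_{\et}(A[x],G^{sc})_0\to H^1_{\et}(K[x],G^{sc})\right)
\xrightarrow{\cong}
\ker\left(H^1_{\et}(A[x],G)_0\to H^1_{\et}(K[x],G)\right),
$$
and the same comparison holds with $K(x)$ in place of $K[x]$, since $H^1(K[x],-)\to H^1(K(x),-)$ is compatible with the $G^{sc}\to G$ map. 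Hence it suffices to prove the theorem for $G$ a simply connected Chevalley--Demazure group, and to compare the two kernels on the right-hand side — i.e. to show that $\ker\left(H^1_{\et}(A[x],G)_0\to H^1_{\et}(K[x],G)\right)$ vanishes and that it is unchanged if we localize $K[x]$ at the monic polynomials to get $K(x)$.

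Next, fix a class $\xi\in\ker\left(H^1_{\et}(A[x],G)_0\to H^1_{\et}(K(x),G)\right)$, $G$ simply connected Chevalley--Demazure. The idea is to glue $\xi$ from two trivializations: one over a suitable localization that sees the generic behavior, and one over a principal open of $\A^1_A$ where $\xi$ is trivial, patching along a Dedekind ring where Theorem~\ref{thm:D} applies. Concretely, write $A$ as a $2$-dimensional regular local ring with maximal ideal $\mathfrak m$; choosing a regular parameter $t\in\mathfrak m$ one has that $A_t$ and $A/tA$ are $1$-dimensional regular rings (the former semilocal, the latter a DVR), and correspondingly $A_t[x]$, $A[x]/tA[x]$, as well as the ``generic fibre'' $K[x]$, have Krull dimension $\le 2$ with good localizations of dimension $\le 1$. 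Since $\xi$ dies over $K(x)$ and over the residue data at $t$ (as $\xi\in H^1_{\et}(A[x],G)_0$), a Quillen-type local-global patching — realizing $\A^1_A$ as the union of $\Spec A[x]_f$ (where $f$ is a monic polynomial over which $\xi$ becomes trivial, using that $\xi$ is killed over $K(x)$ and spreading out) and $\Spec A_t[x]$ — lets one descend $\xi$ to a class over $A_t[x]$ and a class over $A[x]_f$ that agree over the overlap. Over $A_t[x]$, which is the coordinate ring of $\A^1$ over the $1$-dimensional regular semilocal ring $A_t$, one invokes Lemma~\ref{lem:stability} together with the homotopy invariance / extendedness results available in dimension $1$ (this is the $n=1$, $\dim\le 1$ analogue) to conclude the class there is extended from $A_t$, hence trivial since $\xi\in (\,)_0$; over $A[x]_f$ one uses that $\xi$ is already trivial. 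Patching these trivializations over the Dedekind ring controlling the overlap, via Theorem~\ref{thm:D}, forces $\xi=1$ over $A[x]$. The equality of the two kernels (with $K[x]$ versus $K(x)$) then follows because any class trivial over $K(x)$ is, by the same monic-polynomial spreading-out, trivial over some $A[x]_f$ with $f$ monic, which is exactly the input the patching needs; conversely the reverse inclusion is obvious.

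The main obstacle is the patching step: one must arrange a genuine Zariski (or Nisnevich) cover of $\A^1_A$ by opens over which $\xi$ is controlled, and then glue the local trivializations into a global one. The delicate points are (i) that $G$-torsors, being non-abelian cohomology, only glue when the transition cocycle over the overlap is trivial, so one really needs the overlap ring to be a Dedekind domain with $G$ quasi-split in order to apply Theorem~\ref{thm:D} and kill the transition class; and (ii) that the ``extended from $A_t$'' input over $A_t[x]$ relies on the one-variable, dimension-$\le 1$ case, which in turn rests on Lemma~\ref{lem:stability} ($SL_2=E_2$ over a dimension-$\le 1$ Noetherian ring) plus the elementary-subgroup homotopy argument. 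Making the choice of the regular parameter $t$ and the monic $f$ simultaneously compatible with all these requirements — so that $A_t[x]$, $A[x]_f$, and their overlap all have the needed dimension and splitness properties — is where the real work lies; everything else is formal manipulation of the exact sequences already set up in the Preliminaries.
\end{proof}
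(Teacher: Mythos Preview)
Your reduction to the simply connected case via Corollary~\ref{cor:GGsc} is correct, but the main patching argument does not work. First, the two opens $\Spec A_t[x]$ and $\Spec A[x]_f$ do \emph{not} cover $\Spec A[x]$: for any monic $f$ and any regular parameter $t$, the reduction $\bar f\in (A/tA)[x]$ is monic hence a non-unit, so $V(t)\cap V(f)\neq\varnothing$. Second, even on the overlap you do have, $A_t[x]_f$ is two-dimensional, not Dedekind, so Theorem~\ref{thm:D} does not apply there; and in any case, gluing two trivial torsors to a trivial one requires factoring the transition element in $G(A_t[x]_f)$ as a product of elements coming from $G(A_t[x])$ and $G(A[x]_f)$, which is a statement about the group of points $G(-)$, not about vanishing of an $H^1$ class. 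Third, spreading out triviality from $K(x)$ only yields triviality over some $A_h[x]_f$ with $0\neq h\in A$, not over $A[x]_f$ itself.

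The paper's route is quite different. The key input you are missing is~\cite[Theorem~4.1]{St-k1}, which gives $\ker\bigl(H^1_{\et}(A[x],G)\to H^1_{\et}(A(x),G)\bigr)=1$ and so reduces everything to showing $\xi|_{A(x)}=1$. One then fixes a regular parameter $s$ and uses the \emph{formal} (not Zariski) patching square for $A(x)$ along its $s$-adic completion $\widehat{A(x)}$: triviality of $\xi$ over $A(x)_s$ and over $\widehat{A(x)}$ each follow from Theorem~\ref{thm:D} applied to genuinely one-dimensional rings, namely $A_s(x)$ and $(A/s)(x)$ (the latter via the henselian pair $(\widehat{A(x)},s)$). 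The decisive step is then to factor $G(\widehat{A(x)}_s)=G(\widehat{A(x)})\cdot G(A(x)_s)$; this is where Lemma~\ref{lem:stability} actually enters, through the auxiliary Lemma~\ref{lem:hatAx} establishing $G(\widehat{A(x)}_s)=E(\widehat{A(x)}_s)$ (using that $(A/s)(x)$ is a special PID with $SL_2=E_2$), after which elementary elements factor by~\cite[Lemma~3.2]{PaSt}. (The equality of the two kernels in the statement is immediate from~\cite[Prop.~2.2]{CTO}, not from a spreading-out argument.)
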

\begin{proof}
The equality between kernels holds since
$$
\ker\left(H^1_{\et}(K[x],G)\to H^1_{\et}(K(x),G)\right)=1
$$
by~\cite[Prop. 2.2]{CTO}.
By Corollary~\ref{cor:GGsc} it is enough to prove the claim for the simply connected cover $G^{sc}$ of $G^{der}$.
Since $G^{der}$ is split, it follows that $G^{sc}$ is split as well. So, from now on, we assume that $G=G^{sc}$
is a simply connected split semisimple group.

The kernel of the map $H^1_{\et}(A[x],G)\to H^1_{\et}(A(x),G)$ is trivial~\cite[Theorem 4.1]{St-k1} (see
also~\cite[Theorem 1.3]{PaStV},~\cite[Prop. 8.4]{C1}, and~\cite{PaSt-gth} for other versions of this result).
Thus, it is enough to show that for every
$$
\xi\in\ker\left(H^1_{\et}(A[x],G)_0\to H^1_{\et}(K[x],G)\right)
$$
the image of $\xi$ in $H^1_{\et}(A(x),G)$ is trivial.

Let $s$ be a regular parameter of $A$.
 We have the commutative diagram
\begin{equation*}
\xymatrix@R=15pt@C=20pt{
A[x]\ar[r]\ar[d]&A(x)\ar[d]\\
A_s[x]\ar[r]& A(x)_s\ar[r] & A_s(x)\ar[r] & K(x).\\
}
\end{equation*}
The kernel of $H^1_{\et}(A_s[x],G)\to H^1_{\et}(A_s(x),G)$ is trivial (again, by~\cite[Theorem 4.1]{St-k1}).
Since $A_s$ is a Dedekind domain, $A_s(x)$ is also a Dedekind domain~\cite[Ch. IV, Cor. 1.3]{Lam}, hence
the kernel of $H^1_{\et}(A_s(x),G)\to H^1_{\et}(K(x),G)$ is trivial by Theorem~\ref{thm:D}.
Then the kernel of $H^1_{\et}(A_s[x],G)\to H^1_{\et}(K(x),G)$ is trivial.
Hence the image of $\xi$ in $H^1_{\et}(A(x)_s,G)$ is trivial.

Consider the Cartesian square
\begin{equation}\label{eq:squareA(x)}
\xymatrix@R=15pt@C=20pt{
A(x)\ar[r]\ar[d]&A(x)_s\ar[d]\\
\widehat{A(x)}\ar[r]& \widehat{A(x)}_s,\\
}
\end{equation}
where $\widehat{A(x)}$ denotes the $s$-adic completion of $A(x)$, and $\widehat{A(x)}_s$ is its localization at $s$.
This is a patching square for $G$-torsors~\cite[Lemma 2.2.11]{BC}.

We claim that the restriction of $\xi$ to $\widehat{A(x)}$ is trivial.
Indeed, since $\bigl(\widehat{A(x)},\, s\widehat{A(x)}\bigr)$ is a henselian pair, by~\cite[Theorem 2.1.6]{BC} we have
\begin{equation}\label{eq:hpair}
H^1_{\et}(\widehat{A(x)},G)\cong H^1_{\et}(\widehat{A(x)}/s,G).
\end{equation}
Furthermore, we have $\widehat{A(x)}/s\cong (A/s)(x)$ is a regular domain of dimension $\le 1$, and its fraction field
satisfies
$$
\Frac\left((A/s)(x)\right)\cong\Frac(A/s)(x)\cong A_{(s)}(x)/s,
$$
where $A_{(s)}$ is the localization of $A$ at the prime ideal $(s)$. Then  by Theorem~\ref{thm:D}
$$
\ker\left(H^1_{\et}((A/s)(x),G)\to H^1_{\et}(A_{(s)}(x)/s,G)\right)=1.
$$
Since $A_{(s)}(x)$ is itself a regular domain of dimension $\le 1$, again by Theorem~\ref{thm:D}
we have
$$
\ker\left(H^1_{\et}(A_{(s)}(x),G)\to H^1_{\et}(K(x),G)\right)=1.
$$
Since $\xi$ is trivial over $K(x)$, it follows that $\xi$ is trivial over $A_{(s)}(x)$, hence over $A_{(s)}(x)/s$,
hence over $(A/s)(x)$. Then by~\eqref{eq:hpair} the restriction of $\xi$ to $\widehat{A(x)}$ is also trivial.

Thus, $\xi|_{A(x)}$ corresponds to a $G$-torsor whose restrictions to the corners $\widehat{A(x)}$ and $A(x)_s$
of the square~\eqref{eq:squareA(x)} are trivial.
By Lemma~\ref{lem:hatAx} below we have $G(\widehat{A(x)}_s)=E(\widehat{A(x)}_s)$. By~\cite[Lemma 3.2]{PaSt}
we have
$$
E(\widehat{A(x)}_s)\subseteq G(\widehat{A(x)})\cdot E(A(x)_s).
$$
Consequently,
$G(\widehat{A(x)}_s)\subseteq G(\widehat{A(x)})\cdot G(A(x)_s)$.
Then $\xi$ is trivial over $A(x)$.
\end{proof}

\begin{lem}\label{lem:hatAx}
Let $A$ be a regular local ring of dimension $\le 2$, and let $s\in A$ be a regular parameter. Let $\widehat{A(x)}$ be the
$s$-adic completion of $A$. Then the localization $D=\widehat{A(x)}_s$ of $\widehat{A(x)}$ at $s$
is a regular domain of dimension $\le 1$,
and $G(D)=E(D)$ for any simply connected Chevalley--Demazure group scheme $G$.
\end{lem}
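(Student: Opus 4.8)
The plan is to prove the two separate assertions: that $D=\widehat{A(x)}_s$ is a regular domain of Krull dimension $\le 1$, and that $G(D)=E(D)$ for every simply connected Chevalley--Demazure $G$. The second follows from the first via Lemma~\ref{lem:stability}, so the real content is the structural description of $D$ together with the verification that $SL_2(D)=E_2(D)$.

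First I would analyze $\widehat{A(x)}$. If $A$ has dimension $\le 1$ then $s$ is a unit times a generator, $A(x)$ is a Dedekind domain (or a field), and the $s$-adic completion and its localization at $s$ are easy to control directly; so the interesting case is $\dim A=2$. Then $A/s$ is a $1$-dimensional regular local ring, hence a DVR. Since $A(x)$ is the localization of $A[x]$ at monic polynomials, one has $\widehat{A(x)}/s^m\cong (A/s^m)(x)$ for all $m$ — here I would use that monic polynomials over $A$ remain monic, hence non-zero-divisors, modulo $s^m$, and that inverting them commutes with the quotient — and passing to the limit gives $\widehat{A(x)}\cong\widehat{A}(x)$ where $\widehat{A}$ is the $s$-adic completion of $A$. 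Now $\widehat{A}$ is a $2$-dimensional regular local ring with $\widehat{A}/s$ a DVR; it is a domain (being $s$-adically separated with $\widehat{A}/s$ a domain and $s$ a non-zero-divisor), so $\widehat{A(x)}$ is a regular domain of dimension $2$, and inverting $s$ drops the dimension to $\le 1$. Regularity of $D$ is preserved under localization. This shows $D$ is a regular domain with $\dim D\le 1$, which is the first claim.

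Next, for the elementary generation I would reduce, via Lemma~\ref{lem:stability}, to checking $SL_2(D)=E_2(D)$, i.e. that $SL_2$ of $D$ has no nontrivial $SK_1$ or obstruction to elementary generation. Since $D$ is regular of dimension $\le 1$, a clean way is to exhibit $D$ as a ring over which $SL_2$ is elementarily generated for ``soft'' reasons: $D=\widehat{A}(x)_s$ is a localization of a polynomial ring, and more to the point $\widehat{A}_s$ is a Dedekind domain that is the localization of the complete local ring $\widehat{A}$ at one element, hence semilocal (its maximal ideals lie over the single height-one prime $(s)$ of $\widehat{A}$ union the generic point — in fact $\widehat{A}_s$ is a $1$-dimensional domain with finitely many maximal ideals), so $SL_2(\widehat{A}_s)=E_2(\widehat{A}_s)$ by standard semilocal stability. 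Then $D=\widehat{A}_s(x)$: passing from a ring $B$ with $SL_2(B)=E_2(B)$ to $B(x)$ preserves this, since $SL_n(B(x))=E_n(B(x))$ for $n\ge 2$ whenever $SL_n(B)=E_n(B)$ by the standard argument with monic-denominator localizations of $B[x]$ (Suslin's local-global / the behavior of $SK_1$ under this localization). Applying Lemma~\ref{lem:stability} to $D$ then yields $G(D)=E(D)$.

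The main obstacle I expect is the elementary-generation input for $\widehat{A}_s$ and its one-variable extension: one must be careful that $\widehat{A}_s$ really is semilocal (or otherwise has $SL_2=E_2$) — this uses that $\widehat{A}$ is complete local of dimension $2$, so its localization at the single regular parameter $s$ has only finitely many maximal ideals — and that passing to $(x)$ does not destroy this, which is the point where one invokes the patching/localization behavior of $SL_2$ over polynomial rings rather than a naive dimension count. Everything else (the identification $\widehat{A(x)}\cong\widehat{A}(x)$, regularity, and dimension) is routine commutative algebra.
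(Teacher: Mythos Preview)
Your argument rests on two claims that both fail.

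\textbf{(1) The identification $\widehat{A(x)}\cong\widehat{A}(x)$ is false.} Passing to the inverse limit of $(A/s^m)(x)$ does not give $\widehat{A}(x)$, because $\widehat{A}(x)$ is \emph{not} $s$-adically complete. Indeed, both rings have the same reductions modulo $s^m$, so the natural map $\widehat{A}(x)\to\widehat{A(x)}$ is injective, but it is an isomorphism only if the source is complete. To see it is not, note that any element of $\widehat{A}(x)$ is $p/q$ with $q$ monic of some fixed degree $d$; expanding $1/q$ $s$-adically shows that all the $s$-coefficients of $p/q$, viewed in $\Frac(\widehat{A}/s)(x)$, have denominator a power of $q\bmod s$. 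By contrast the compatible system with $m$-th term $\sum_{i=1}^{m-1}s^i/(x-t^i)$ (say for $\widehat{A}=k[[s,t]]$) lies in $\widehat{A(x)}$ but has unbounded, pairwise coprime denominators, so it cannot come from $\widehat{A}(x)$. Thus $D$ is genuinely larger than $\widehat{A}_s(x)$, and you cannot reduce to studying $\widehat{A}_s$.

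\textbf{(2) Even ignoring (1), $\widehat{A}_s$ is not semilocal.} For $\widehat{A}=k[[s,t]]$ the height-one primes not containing $s$ are the $(t+s^n)$, $n\ge 1$ (among infinitely many others), and these give infinitely many maximal ideals of $\widehat{A}_s$. So the ``semilocal stability'' input for $SL_2(\widehat{A}_s)=E_2(\widehat{A}_s)$ is unavailable, and with it your route to $SL_2(D)=E_2(D)$ collapses.

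The paper avoids both problems by never leaving $\widehat{A(x)}$ itself. It uses that $\bigl(\widehat{A(x)},\,s\widehat{A(x)}\bigr)$ is a henselian pair with quotient $(A/s)(x)$, which is a \emph{special PID} in Lam's sense when $A/s$ is a DVR; this gives $SL_n((A/s)(x))=E_n((A/s)(x))$ and, by a henselian lifting result, $SL_n(\widehat{A(x)})=E_n(\widehat{A(x)})$. A localization lemma (again from Lam, using that the quotient is a PID) then transports this to $D=\widehat{A(x)}_s$, after which your intended appeal to Lemma~\ref{lem:stability} goes through. The dimension claim is the easy part and can be salvaged from your outline: $A(x)$ is local of dimension $\le 2$, completion preserves this, and every maximal ideal of the completion contains $s$, so inverting $s$ drops the dimension.
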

\begin{proof}
We have
$$
\widehat{A(x)}/s\cong A(x)/s\cong (A/s)(x).
$$
If $\dim A=1$, then $(A/s)(x)$ is a field, and hence
$SL_n((A/s)(x))=E_n((A/s)(x))$ for any $n\ge 2$.
If $\dim A=2$, then $A/s$ is a discrete valuation ring, and
$(A/s)(x)$ is a special principal ideal domain in the sense of~\cite[Ch. IV, \S~6]{Lam}. Consequently,
$SL_n((A/s)(x))=E_n((A/s)(x))$ for any $n\ge 2$ by~\cite[Ch. IV, Corollary 6.3]{Lam}.

Since $\bigl(\widehat{A(x)},\, s\widehat{A(x)}\bigr)$ is a henselian pair, by~\cite[Prop. 7.7]{GSt} we have
$$
SL_n(\widehat{A(x)})/E_n(\widehat{A(x)})\cong SL_n((A/s)(x))/E_n((A/s)(x))=1
$$
for any $n\ge 2$.
Since, again, $(A/s)(x)$ is a principal ideal domain, by~\cite[Ch. IV, Lemma 6.1]{Lam} we have that, for every $n\ge 2$,
$SL_n(D)=E_n(D)$, where $D=\widehat{A(x)}_s$.

Note that $\dim(D)\le 1$. Indeed, $\dim(\widehat{A(x)})\le 2$
and all maximal ideals in $\widehat{A(x)}$ contain $s$.
It follows that all maximal
ideals in  $D$ have height $\le 1$.
Then by Lemma~\ref{lem:stability} we have $G(D)=E(D)$
for any simply connected Chevalley--Demazure group scheme $G$.
\end{proof}

\begin{cor}\label{cor:A[x]}
Let $A$ be a regular ring of dimension $\le 2$.
 Let $G$ be a
split (i.e. a Chevalley--Demazure) reductive group over $\mathbb{Z}$.
Then $H^1_{Zar}(A[x],G)=H^1_{Zar}(A,G)$.
In other words, every Zariski-locally trivial principal $G$-bundle over
$A[x]$ is extended from $A$.
\end{cor}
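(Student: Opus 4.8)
The plan is to reduce the global statement over an arbitrary $2$-dimensional regular ring $A$ to the local statement of Theorem~\ref{th:BQdim2split} by a patching/descent argument combined with the known behaviour of $G$-torsors on $A(x)$. First I would observe that it suffices to prove the injectivity of $H^1_{Zar}(A[x],G)\to H^1_{Zar}(A,G)$ composed with restriction: since $G$ is defined over $\mathbb{Z}$, it has a point over any ring, so surjectivity (and the splitting $H^1_{Zar}(A,G)\to H^1_{Zar}(A[x],G)$ via the zero section $x=0$) is automatic, and the kernel $H^1_{Zar}(A[x],G)_0$ is exactly what must be shown to be trivial. So fix $\xi\in H^1_{Zar}(A[x],G)_0$; I want $\xi=1$.

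The key step is a local-global principle in the sense of Quillen: a Zariski-locally trivial $G$-torsor $\xi$ over $A[x]$ with $\xi|_{x=0}$ trivial is itself trivial provided that for every maximal (equivalently, prime) ideal $\mathfrak{m}$ of $A$ the restriction $\xi_{\mathfrak{m}}$ over $A_{\mathfrak{m}}[x]$ is trivial. This is the standard patching argument (originating with Quillen's patching, and available for reductive group torsors, e.g. via the results cited in the excerpt for $A(x)$ and henselian pairs): one passes to the ``generic variable'' ring $A(x)$, uses that $\xi$ becomes trivial over $A_{\mathfrak{m}}(x)$ for each $\mathfrak{m}$ hence over $A(x)$ after a Nisnevich/Zariski refinement, and then a patching square in the single variable $s$ (as used already in the proof of Theorem~\ref{th:BQdim2split}) together with $E$-surjectivity from Lemma~\ref{lem:stability} type statements glues the local triviality data back to triviality over $A[x]$. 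Concretely: for each $\mathfrak{m}$, $A_{\mathfrak{m}}$ is a regular local ring of dimension $\le 2$; by Theorem~\ref{th:BQdim2split} applied to $A_{\mathfrak{m}}$ (with $G$ now a split reductive $\mathbb{Z}$-group, so $G^{der}$ is split over $A_{\mathfrak{m}}$ and the hypothesis is satisfied) we get
$$
\ker\left(H^1_{\et}(A_{\mathfrak{m}}[x],G)_0\to H^1_{\et}(K_{\mathfrak{m}}[x],G)\right)=1,
$$
where $K_{\mathfrak{m}}=\Frac(A_{\mathfrak{m}})$; since $\xi$ is Zariski-locally trivial and $\xi|_{x=0}=1$, and since over the field $K_{\mathfrak{m}}$ a Zariski-locally trivial torsor over $K_{\mathfrak{m}}[x]$ with trivial fibre at $x=0$ is trivial (by the one-variable Bass--Quillen/Serre result over a field, e.g. as in the references of the excerpt), we conclude $\xi_{\mathfrak{m}}=1$ over $A_{\mathfrak{m}}[x]$.

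It remains to propagate this pointwise-local triviality to global triviality over $A[x]$. For this I would invoke the Zariski local-global principle for $G$-torsors extended from the base: the functor $R\mapsto H^1_{Zar}(R[x],G)_0$ satisfies a local-global principle over regular rings because $\xi$ extended-triviality can be checked after localizing at primes, using that $G$ is smooth affine and the patching square~\eqref{eq:squareA(x)} (with $A$ in place of a local ring) remains Cartesian and a patching square for $G$-torsors along any regular parameter, combined with the elementary-subgroup surjectivity $E(D_s)\subseteq G(D)\cdot E(A(x)_s)$ exactly as in the final paragraph of the proof of Theorem~\ref{th:BQdim2split}. Running this argument Zariski-locally on $\Spec A$ and using that $\xi$ is already trivial on each $A_{\mathfrak{m}}[x]$, one glues to get $\xi=1$ on $A[x]$.

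The main obstacle is the last step: making the passage from ``$\xi$ is trivial over $A_{\mathfrak{m}}[x]$ for all $\mathfrak{m}$'' to ``$\xi$ is trivial over $A[x]$'' fully rigorous for a non-local $2$-dimensional regular $A$. Over a local ring the patching square argument in Theorem~\ref{th:BQdim2split} is clean, but globally one must either (i) cite an existing Quillen-type local-global principle for reductive group torsors over $A[x]$ valid in this generality — which, for Zariski-locally trivial torsors and smooth affine $G$ defined over $\mathbb{Z}$, should follow from the affine-line homotopy invariance and patching results already used — or (ii) reduce to the local case by a Noetherian induction/gluing over a finite affine cover, controlling the obstruction cocycles. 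I would take route (i), citing the local-global principle, with the honest technical content being the verification that its hypotheses (Zariski-local triviality, which is assumed; regularity of $A$; and the field-level one-variable case) are all in place.
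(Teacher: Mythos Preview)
Your proposal is correct and follows essentially the same route as the paper. The paper's proof is very short: it notes that $G$ is linear over $\mathbb{Z}$ by~\cite[Corollary 3.2]{Thomason}, then invokes the generalized Quillen local--global principle of~\cite[Theorem 3.2.5]{AHW} (see also~\cite{Moser}) to reduce to showing $\xi|_{A_{\mathfrak m}[x]}$ is trivial for each maximal $\mathfrak m$, and finishes with Theorem~\ref{th:BQdim2split} exactly as you do. In other words, what you identify as ``the main obstacle'' --- passing from triviality over each $A_{\mathfrak m}[x]$ to triviality over $A[x]$ --- is handled by a direct citation, so your route~(i) is precisely what the paper does and the ad~hoc patching you sketch in the penultimate paragraph is unnecessary (and, as written, would not straightforwardly globalize the argument of Theorem~\ref{th:BQdim2split}).
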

\begin{proof}
The group $G$ is linear by~\cite[Corollary 3.2]{Thomason}, since it is defined over the regular ring $\mathbb{Z}$.
Then by the generalized Quillen's local-global principle~\cite[Theorem 3.2.5]{AHW} (see also~\cite{Moser}), to prove that $\xi\in H^1_{Zar}(A[x],G)$ is extended from $A$, it is enough to prove that
for every maximal ideal $m$ of $A$ $\xi|_{A_m[x]}$ is trivial. The latter holds by Theorem~\ref{th:BQdim2split},
since $(\xi|_{A_m[x]})|_{x=0}$ is a Zariski-locally trivial $G$-torsor over $A_m$, hence trivial.
\end{proof}

\section{The $n$-variable case}

\begin{lem}\label{lem:sc-Dxn}
Let $A$ be a regular ring of dimension $\le 2$ such that the fraction field of
every maximal localization of $A$ has characteristic $0$.
 Let $G$ be a
split (i.e. a Chevalley--Demazure) simply connected semisimple group over $\mathbb{Z}$.
Then for any $n\ge 1$
$$
H^1_{\et}(A[x_1,\ldots,x_n],G)_0=1.
$$
\end{lem}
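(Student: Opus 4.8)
The plan is to prove the statement by induction on $n$, simultaneously for all \emph{admissible} rings $A$, i.e.\ regular rings of dimension $\le 2$ all of whose maximal localizations have fraction field of characteristic $0$; the split simply connected group $G$ stays fixed. Given $\xi\in H^1_{\et}(A[x_1,\ldots,x_n],G)_0$, I first reduce to $A$ local. The group $G$ is linear by~\cite[Corollary 3.2]{Thomason}, so the generalized Quillen local-global principle~\cite[Theorem 3.2.5]{AHW} (see also~\cite{Moser}) applies over $A$; hence it suffices to show that $\xi|_{A_{\mathfrak{m}}[x_1,\ldots,x_n]}\in H^1_{\et}(A_{\mathfrak{m}}[x_1,\ldots,x_n],G)_0$ is trivial for every maximal ideal $\mathfrak{m}$ of $A$, because triviality of all these restrictions forces $\xi$ to be extended from $A$, and then $\xi$ is isomorphic to the pullback of $\xi|_{x_1=\ldots=x_n=0}=1$, i.e.\ $\xi=1$. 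Each $A_{\mathfrak{m}}$ is again admissible, so from now on $A$ is a $2$-dimensional regular local ring with fraction field $K$ of characteristic $0$ (the cases $\dim A\le 1$ being easier, and for instance contained in~\cite[Theorem 1.8]{NGFL-bq}).

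Base case $n=1$. The restriction $\xi|_{K[x_1]}$ is a $G$-torsor over $\mathbb{A}^1_K$ that is trivial at $x_1=0$, hence trivial by the theorem of Raghunathan--Ramanathan~\cite{RaRa} (this is the point at which the characteristic $0$, equivalently the perfectness, of $K$ is used). Therefore $\xi\in\ker\bigl(H^1_{\et}(A[x_1],G)_0\to H^1_{\et}(K[x_1],G)\bigr)$, which is trivial by Theorem~\ref{th:BQdim2split}, so $\xi=1$.

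Inductive step. Let $n\ge 2$ and assume the statement for $n-1$ variables over every admissible ring. Put $B=A[x_1,\ldots,x_{n-1}]$, and let $S\subset A[x_n]\subset B[x_n]$ be the set of monic polynomials in $x_n$ with coefficients in $A$, so that $S^{-1}\bigl(B[x_n]\bigr)=A(x_n)[x_1,\ldots,x_{n-1}]$. The crucial point is that $A(x_n)$ is again admissible: it is regular, it is a domain with fraction field $K(x_n)$ of characteristic $0$, and its prime ideals are exactly the extensions $\mathfrak{p}\,A(x_n)$ of the prime ideals $\mathfrak{p}$ of $A$, so that $\dim A(x_n)=\dim A\le 2$. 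Since $\xi|_{x_1=\ldots=x_{n-1}=0}\in H^1_{\et}(A[x_n],G)_0$ is trivial by the base case, the image $\bar\xi$ of $\xi$ in $H^1_{\et}(A(x_n)[x_1,\ldots,x_{n-1}],G)$ lies in $H^1_{\et}(A(x_n)[x_1,\ldots,x_{n-1}],G)_0$, which is trivial by the induction hypothesis applied to the admissible ring $A(x_n)$. Hence $\xi$ becomes trivial over $B(x_n)$, and a fortiori over $B_{\mathfrak{q}}(x_n)$ for every maximal ideal $\mathfrak{q}$ of $B$. As $B_{\mathfrak{q}}$ is regular local,~\cite[Theorem 4.1]{St-k1} gives $\ker\bigl(H^1_{\et}(B_{\mathfrak{q}}[x_n],G)\to H^1_{\et}(B_{\mathfrak{q}}(x_n),G)\bigr)=1$, so $\xi|_{B_{\mathfrak{q}}[x_n]}=1$, which is in particular extended from $B_{\mathfrak{q}}$. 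By the generalized Quillen local-global principle over $B$ it follows that $\xi$ is extended from $B$, hence isomorphic to the pullback of $\eta:=\xi|_{x_n=0}\in H^1_{\et}(A[x_1,\ldots,x_{n-1}],G)$. Finally $\eta|_{x_1=\ldots=x_{n-1}=0}=\xi|_{x_1=\ldots=x_n=0}=1$, so $\eta\in H^1_{\et}(A[x_1,\ldots,x_{n-1}],G)_0$, which is trivial by the induction hypothesis applied to $A$; thus $\eta=1$ and $\xi=1$.

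The main obstacle is the choice of auxiliary ring in the inductive step: one must invert only the monic polynomials in the single variable $x_n$ with coefficients in the ground ring $A$ (rather than in $B=A[x_1,\ldots,x_{n-1}]$), since this is precisely what keeps the dimension equal to $\dim A\le 2$ so that the induction hypothesis can be invoked, while the descent of triviality from $A(x_n)[x_1,\ldots,x_{n-1}]$ back to $A[x_1,\ldots,x_n]$ then forces one to combine Quillen patching with the one-variable homotopy invariance~\cite[Theorem 4.1]{St-k1} over the auxiliary regular local rings $B_{\mathfrak{q}}$, which may well be of dimension larger than $2$ and of mixed characteristic. Verifying that~\cite[Theorem 4.1]{St-k1} and the Quillen local-global principle are indeed available in exactly this generality, and that all the auxiliary rings $A_{\mathfrak{m}}$, $A(x_n)$, $B_{\mathfrak{q}}$ remain regular, of the correct dimension, and with characteristic $0$ fraction fields so that the induction keeps going, is the delicate part; once these inputs are in place the argument is essentially formal.
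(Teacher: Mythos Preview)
Your proof is correct and follows essentially the same approach as the paper: reduction to the local case via Quillen patching, the $n=1$ base case via Theorem~\ref{th:BQdim2split} together with Raghunathan--Ramanathan, and the inductive step via the admissibility of $A(x_n)$ combined with~\cite[Theorem 4.1]{St-k1}. The only difference is that the paper applies~\cite[Theorem 4.1]{St-k1} directly to the (non-local) polynomial ring $B=A[x_2,\ldots,x_n]$ in the form $\ker\bigl(H^1_{\et}(B[x_1],G)\to H^1_{\et}(B[x_1]_f,G)\bigr)=1$ for a single monic $f$, which short-circuits your second Quillen patching over $B$ and the extra invocation of the induction hypothesis for $\eta$.
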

\begin{proof}
We proceed by induction on $n$.

The group $G$ is linear by~\cite[Corollary 3.2]{Thomason}, since it is defined over the regular ring $\mathbb{Z}$.
Then by~\cite[Theorem 3.2.5]{AHW} a torsor
$\xi\in H^1_{\et}(A[x_1,\ldots,x_n],G)$ is extended from $A$ if and only if for any maximal ideal $m\subseteq A$
the image of $\xi$ in $H^1_{\et}(A_m[x_1,\ldots,x_n],G)$ is extended from $A_m$. Thus, to prove the claim,
it is enough to show that it holds for every regular local ring $A$ such that $\dim(A)\le 2$ and the fraction field $K$ of $A$
has characteristic $0$.

Assume $n=1$. By Theorem~\ref{th:BQdim2split} the map
$$
H^1_{\et}(A[x_1],G)_0\to H^1_{\et}(K[x_1],G)
$$
has trivial kernel. Since $K$ has characteristic $0$, by the main result of~\cite{RaRa} the map
$$
H^1_{\et}(K[x_1],G)\xrightarrow{x_1=0} H^1_{\et}(K,G)
$$ is bijective. It follows that $H^1_{\et}(A[x_1],G)_0=1$.

Assume $n\ge 2$. The ring $A(x_1)$ is a regular domain such that
$\dim(A(x_1))\le 2$~\cite[Ch. IV, Cor. 1.3]{Lam}.
Clearly, the fraction field $K(x_1)$ of $A(x_1)$ has characteristic $0$. Then by the inductive assumption
$$
H^1_{\et}(A(x_1)[x_2,\ldots,x_n],G)_0=1.
$$
Consider the commutative diagram
\begin{equation*}
\xymatrix@R=15pt@C=70pt{
H^1_{\et}(A[x_1][x_2,\ldots,x_n],G)\ar[d]\ar[r]^{\quad x_2=\ldots=x_n=0}&H^1_{\et}(A[x_1],G)\ar[r]^{x_1=0}\ar[d] & H^1_{\et}(A,G)\\
H^1_{\et}(A(x_1)[x_2,\ldots,x_n],G)\ar[r]^{\quad x_2=\ldots=x_n=0}&H^1_{\et}(A(x_1),G)&\\
}
\end{equation*}
It implies that if $\xi\in H^1_{\et}(A[x_1,\ldots,x_n],G)_0$, then $\xi$ is in the kernel of
$$
H^1_{\et}(A[x_1,\ldots,x_n],G)\to H^1_{\et}(A(x_1)[x_2,\ldots,x_n],G).
$$
Set $B=A[x_2,\ldots,x_n]$.
Thus there is a monic polynomial $f(x_1)\in B[x_1]$ such that
$$
\xi\in\ker\left(H^1_{\et}(B[x_1],G)\to H^1_{\et}(B[x_1]_f,G)\right).
$$
By~\cite[Theorem 4.1]{St-k1}
the latter map has trivial kernel. Hence $\xi$ is trivial.
\end{proof}

\begin{lem}\label{lem:red-DK}
Let $A$ be a semilocal regular domain of dimension $\le 2$
such that its fraction field $K$ has characteristic $0$.
Let $G$ be a
reductive group over $A$ such that its derived group $G^{der}$ in the sense of~\cite{SGA3} is a split, i.e. Chevalley--Demazure,
 semisimple group.
Then
$$
\ker\bigl(H^1_{\et}(A[x_1,\ldots,x_n],G)_0\to H^1_{\et}(K[x_1,\ldots,x_n],G)\bigr)=1.
$$
\end{lem}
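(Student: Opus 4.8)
The plan is to reduce immediately to the case of a split simply connected semisimple group and then quote Lemma~\ref{lem:sc-Dxn}; throughout, write $\mathbf{x}$ for the string of variables $x_1,\ldots,x_n$. First I would invoke Corollary~\ref{cor:GGsc} with $R=A$, which is legitimate since $A$ is a regular semilocal domain with fraction field $K$: writing $G^{sc}$ for the simply connected cover of the derived group $G^{der}$, the natural homomorphism $G^{sc}\to G$ yields an isomorphism
$$
\ker\bigl(H^1_{\et}(A[\mathbf{x}],G^{sc})_0\to H^1_{\et}(K[\mathbf{x}],G^{sc})\bigr)
\xrightarrow{\ \cong\ }
\ker\bigl(H^1_{\et}(A[\mathbf{x}],G)_0\to H^1_{\et}(K[\mathbf{x}],G)\bigr),
$$
so it suffices to prove that the left-hand kernel is trivial. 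Since $G^{der}$ is split, $G^{sc}$ is a split simply connected semisimple $A$-group, i.e. the base change of a Chevalley--Demazure group scheme over $\mathbb{Z}$.

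Next I would verify the hypotheses of Lemma~\ref{lem:sc-Dxn} for this $G^{sc}$. The ring $A$ is regular of dimension $\le 2$, and since $A$ is a domain with $\Frac(A)=K$ of characteristic $0$, every maximal localization $A_m$ satisfies $A\subseteq A_m\subseteq K$, whence $\Frac(A_m)=K$ has characteristic $0$. Lemma~\ref{lem:sc-Dxn} therefore applies and gives the even stronger conclusion $H^1_{\et}(A[\mathbf{x}],G^{sc})_0=1$ for every $n\ge 1$. In particular the left-hand kernel of the isomorphism above is trivial, hence so is the kernel in the statement to be proved.

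The step I expect to be routine rather than obstructive is precisely this matching of hypotheses: all the substantive work has already been carried out in the earlier results — the reduction to the simply connected case along the exact sequences $1\to G^{der}\to G\to\corad(G)\to 1$ and $1\to C\to G^{sc}\to G^{der}\to 1$ of $A$-groups (Lemma~\ref{lem:TG'GT} and Corollary~\ref{cor:GGsc}), and the split simply connected case (Lemma~\ref{lem:sc-Dxn}, which is proved by induction on $n$ using Theorem~\ref{th:BQdim2split} together with the characteristic-$0$ one-variable result of~\cite{RaRa}). The only genuinely new point to check is that the characteristic-$0$ hypothesis propagates from $A$ to its maximal localizations, which is automatic because $A$ is a domain; so I do not anticipate any real difficulty.
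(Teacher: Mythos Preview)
Your proof is correct and follows exactly the paper's approach: the paper's own proof is the single line ``Follows from Lemma~\ref{lem:sc-Dxn} and Corollary~\ref{cor:GGsc},'' and your argument is simply a careful unpacking of that sentence, including the verification that the characteristic-$0$ hypothesis on maximal localizations of $A$ is automatic since $A$ is a domain.
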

\begin{proof}
Follows from Lemma~\ref{lem:sc-Dxn} and Corollary~\ref{cor:GGsc}.
\end{proof}

\begin{proof}[Proof of Theorem~\ref{thm:main}]
The group $G$ is $A$-linear by~\cite[Corollary 3.2]{Thomason}. Then by~\cite[Theorem 3.2.5]{AHW} a torsor
$\xi\in H^1_{\et}(A[x_1,\ldots,x_n],G)$ is extended from $A$ if and only if for any maximal ideal $m\subseteq A$
the image of $\xi$ in $H^1_{\et}(A_m[x_1,\ldots,x_n],G)$ is extended from $A_m$.

Thus, we need to show that $H^1_{Zar}(A[x_1,\ldots,x_n],G)_0=1$.
Let $B=A_m$ be a maximal localization of $A$. Then $B$ is a regular local ring of dimension $\le 2$.
If $B$ contains a field, then every Zariski-locally trivial principal $G$-bundle over $B[x_1,\ldots,x_n]$
is extended from $B$ by~\cite[Corollary 5.5]{St-k1}. Otherwise the fraction field of $B$ has characteristic 0.
Then
by Lemma~\ref{lem:red-DK} we have
$$
\ker\bigl(H^1_{\et}(A[x_1,\ldots,x_n],G)_0\to H^1_{\et}(K[x_1,\ldots,x_n],G)\bigr)=1.
$$
 Again by~\cite[Corollary 5.5]{St-k1}, the map
$$
H^1_{Zar}(K[x_1,\ldots,x_n],G)\xrightarrow{x_1=\ldots=x_n=0} H^1_{Zar}(K,G)
$$
is bijective, and hence
$$
H^1_{Zar}(B[x_1,\ldots,x_n],G)_0\subseteq
\ker\bigl(H^1_{\et}(A[x_1,\ldots,x_n],G)_0\to H^1_{\et}(K[x_1,\ldots,x_n],G)\bigr)=1,
$$
as required.
\end{proof}

\bigskip

\medskip

\end{document}